 \def\Soc{{\mathbf Soc}}
 \def\opn#1#2{\def#1{\operatorname{#2}}} 
 \opn\chara{char} \opn\length{\ell} \opn\pd{pd} \opn\rk{rk}
 \opn\projdim{proj\,dim} \opn\injdim{inj\,dim} \opn\rank{rank}
 \opn\depth{depth} \opn\grade{grade} \opn\height{height}
 \opn\bigheight{bigheight}
 \opn\embdim{emb\,dim} \opn\codim{codim}
 \opn\superheight{superheight}\opn\lcm{lcm}
 \opn\trdeg{tr\,deg}
 \opn\reg{reg} \opn\lreg{lreg} \opn\ini{in} \opn\lpd{lpd}
 \opn\size{size} \opn\sdepth{sdepth}
 \opn\link{link}\opn\fdepth{fdepth}\opn\lex{lex}
 \opn\type{type}
 \opn\gap{gap}
 \opn\arithdeg{arith-deg}
 \opn\Deg{Deg}
 \opn\sat{sat}
 \opn\mat{mat}
 \opn\Mat{Mat}
 \opn\div{div} \opn\Div{Div} \opn\cl{cl} \opn\Cl{Cl}
 \opn\Spec{Spec} \opn\Supp{Supp} \opn\supp{supp} \opn\Sing{Sing}
 \opn\Ass{Ass} \opn\Min{Min}\opn\Mon{Mon} \opn\Max{Max}
 \opn\Ann{Ann} \opn\Rad{Rad} \opn\Soc{Soc}
 \opn\Im{Im} \opn\Ker{Ker} \opn\Coker{Coker} \opn\Am{Am}
 \opn\Hom{Hom} \opn\Tor{Tor} \opn\Ext{Ext} \opn\End{End}
 \opn\Aut{Aut} \opn\id{id}
 \opn\nat{nat}
 \opn\pff{pf}
 \opn\Pf{Pf} \opn\GL{GL} \opn\SL{SL} \opn\mod{mod} \opn\ord{ord}
 \opn\Gin{Gin} \opn\Hilb{Hilb}\opn\sort{sort}
 \opn\PF{PF}\opn\Ap{Ap}
 \opn\mult{mult}
 \opn\bight{bight}
 \opn\aff{aff}
 \opn\relint{relint} \opn\st{st}
 \opn\lk{lk} \opn\cn{cn} \opn\core{core} \opn\vol{vol}  \opn\inp{inp} \opn\nilpot{nilpot}
 \opn\link{link} \opn\star{star}\opn\lex{lex}\opn\set{set}
 \opn\width{wd}
 \opn\Fr{F}
 \opn\QF{QF}
 \opn\G{G}
 \opn\type{type}\opn\res{res}
 \opn\conv{conv}
 \opn\Shad{Shad}
 \opn\gr{gr}
 \def\pot#1#2{#1[\kern-0.28ex[#2]\kern-0.28ex]}
 \opn\dirlim{\underrightarrow{\lim}}
 \opn\inivlim{\underleftarrow{\lim}}
 \let\to=\rightarrow
 \def\Implies{\ifmmode\Longrightarrow \else
         \unskip${}\Longrightarrow{}$\ignorespaces\fi}
 \def\implies{\ifmmode\Rightarrow \else
         \unskip${}\Rightarrow{}$\ignorespaces\fi}
 \def\iff{\ifmmode\Longleftrightarrow \else
         \unskip${}\Longleftrightarrow{}$\ignorespaces\fi}
\theoremstyle{plain}
\newtheorem{theorem}{Theorem}[section]
\newtheorem{Theorem}[theorem]{Theorem}
\newtheorem{thm}[theorem]{Theorem}
\newtheorem{prop}[theorem]{Proposition}
\newtheorem{cor}[theorem]{Corollary}
\newtheorem{lem}[theorem]{Lemma}
\newtheorem{claim}{Claim}
\theoremstyle{definition}
\newtheorem{defn}[theorem]{Definition}
\newtheorem{ex}[theorem]{Example}
\newtheorem{question}[theorem]{Question}
\newtheorem{Question}[theorem]{Question}
\newtheorem{rem}[theorem]{Remark}
\newtheorem{fact}[theorem]{Fact}
\newtheorem*{acknowledgments}{Acknowledgments}
\newtheorem*{observation}{Observation}
 \let\epsilon\varepsilon
 \let\kappa=\varkappa
 \def\qed{\ifhmode\textqed\fi
       \ifmmode\ifinner\quad\qedsymbol\else\dispqed\fi\fi}
 \def\textqed{\unskip\nobreak\penalty50
        \hskip2em\hbox{}\nobreak\hfil\qedsymbol
        \parfillskip=0pt \finalhyphendemerits=0}
 \def\dispqed{\rlap{\qquad\qedsymbol}}
 \opn\dis{dis}
 \def\pnt{{\raise0.5mm\hbox{\large\bf.}}}
 \opn\Lex{Lex}
\newcommand{\rmr}{\mathrm{r}}
\newcommand{\rmF}{\mathrm{F}}
\newcommand{\rmQ}{\mathrm{Q}}
\newcommand{\fkm}{\mathfrak{m}}
\def\ol{\overline}
\def\tr{\mathrm{tr}}
\title{Upper bound on the colength of the trace of the canonical module in dimension one}
\author{J\"{u}rgen Herzog}
\address{J\"urgen Herzog: Fachbereich Mathematik, Universit\"at Duisburg-Essen, Fakult\"at f\"ur Mathematik, 45117 Essen, Germany}
\email{juergen.herzog@uni-essen.de}
\author{Shinya Kumashiro}
\address{Shinya Kumashiro: National Institute of Technology, Oyama College
771 Nakakuki, Oyama, Tochigi, 323-0806, Japan}
\email{skumashiro@oyama-ct.ac.jp}
\thanks{2020 {\em Mathematics Subject Classification.} Primary: 13H10, Secondary: 13C13}
\thanks{{\em Key words and phrases.} trace of the canonical module, numerical semigroup ring, birational Gorenstein colength}
\thanks{The second author was supported by JSPS KAKENHI Grant Number JP21K13766.}
\begin{document}

\begin{abstract}
We study the upper bound of the colength of trace of the canonical module in one-dimensional Cohen-Macaulay rings. We answer the two questions posed by Herzog-Hibi-Stamate and Kobayashi.
\end{abstract}

\maketitle



\section{Introduction}\label{section1}

Let $H$ be an additive subsemigroup of $\mathbb{N}=\{0, 1,2, \dots\}$ with $0\in H$ such that $\mathbb{N}\setminus H$ is finite. 
Then, $H$ defines a $K$-algebra 
\[
R=K[H]=K[t^h : h\in H] \subseteq K[t], 
\]
where $K[t]$ is the polynomial ring over a field $K$. $R$ is called a {\it numerical semigroup ring} of $H$ over $K$. $R$ is a one-dimensional Cohen-Macaulay graded domain possessing a graded canonical module $\omega_R$. One of the most famous results in numerical semigroup rings is Kunz's characterization of the Gorenstein property (\cite{Kunz}). Starting from this result, there have been several attempts to measure how much a numerical semigroup deviates from being Gorenstein. In this article, we focus on the trace of the canonical module and investigate the upper bound on its colength.

Let 
\[
\tr_R(\omega_R)=\sum_{f\in \Hom_R(\omega_R, R)} \Im f
\]
denote the {\it trace} of the canonical module $\omega_R$ (see \cite{Lin, HHS}). It is known that the ideal $\tr_R(\omega_R)$ defines the non-Gorenstein locus of $R$ (\cite[Lemma 2.1]{HHS}). Hence, if $R$ is a numerical semigroup ring, then the length of $R/\tr_R(\omega_R)$ is finite and $R/\tr_R(\omega_R)=0$ if and only if $R$ is Gorenstein. From this perspective, the following two conjectures for the upper bound of the length of $R/\tr_R(\omega_R)$ have been proposed. Let $\ell_R(*)$ denotes the length.

\begin{Question} (\cite[Question 1.3]{HHS2} and \cite[Question 2.10]{Ko}) \label{mainquest}
\begin{enumerate}[{\rm (a)}] 
\item Does the inequality 
\[
\ell_R(R/\tr_R(\omega_R)) \le g(H) - n(H)
\] 
hold? Here, 
$g(H)=|\mathbb{N}\setminus H|$ and $n(H)=|\{h\in H : h<\rmF(H)\}|$ denote the number of {\it gaps} and the number of  {\it non-gaps}, respectively, and $\rmF(H)$ denotes the Frobenius number of $H$, that is, the largest integer in $\mathbb{N}\setminus H$. 
\item Does the inequality 
\[
\ell_R(R/\tr_R(\omega_R)) \le bg(R)
\] 
hold? Here, 
\[
bg(R)=\inf \left\{\ell_S(R/S) : \begin{matrix} \text{$S$ is a (one-dimensional) Gorenstein ring such that} \\
\text{$S\subseteq R$ is a birational extension}
\end{matrix}
\right\}
\]
denotes the {\it birational Gorenstein colength}.
\end{enumerate} 
\end{Question}

It is known that Question \ref{mainquest}(a) has a positive answer if either $R$ is Gorenstein (\cite[Lemma 1(f)]{FGH}) or the embedding dimension of $R$ is at most $3$ (\cite[Proposition 2.2]{HHS2}). For Question \ref{mainquest}(b), it is known that Question \ref{mainquest}(b) has a positive answer if $bg(R) \le 1$ (\cite[Proposition 3.6]{Ko}).
Among these progresses, our results answer Question \ref{mainquest}(a) and (b) as follows:

\begin{thm} 
\begin{enumerate}[{\rm (a)}] 
\item {\rm (Theorem \ref{thm} and Example \ref{counterex}):} 
\begin{enumerate}[{\rm (i)}] 
\item Question \ref{mainquest}(a) has a positive answer if $R$ has the Cohen-Macaulay type at most $3$. 
\item There exists a numerical semigroup ring with Cohen-Macaulay type $5$ such that Question \ref{mainquest}(a) has a negative answer.
\end{enumerate} 
\item {\rm (Corollary \ref{corthm3.2} and Example \ref{counterex3}):} 
\begin{enumerate}[{\rm (i)}] 
\item If $R$ is not Gorenstein, then $\ell_R(R/\tr_R(\omega_R)) \le 2bg(R) - 1$. 
\item For all $\ell\ge 2$, there exists a numerical semigroup ring $R$ with $bg(R)=\ell$ such that 
\[
\ell_R(R/\tr_R(\omega_R)) = 2bg(R) - 1.
\] 
\end{enumerate} 
Hence, Question \ref{mainquest}(b) has a positive answer if $bg(R) \le 1$, but not if $bg(R) \ge 2$.
\end{enumerate} 
\end{thm}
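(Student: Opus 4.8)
The plan is to pass to the combinatorics of $H$ and treat the four assertions with two different engines: a conductor/duality computation for part (b) and an explicit combinatorial count for part (a), supplemented by hand-built examples for the two sharpness and failure statements. Throughout I would normalize the canonical module as a fractional ideal $R\subseteq\omega\subseteq\ol{R}=K[t]$ whose exponent set is $K_H=\{z\in\mathbb Z:\rmF(H)-z\notin H\}$; its minimal monomial generators are the $t^{\,\rmF(H)-x}$ with $x\in\PF(H)$, so the Cohen-Macaulay type of $R$ equals $|\PF(H)|$. Writing $\tr_R(\omega_R)=\omega\cdot(R:\omega)$ and setting $C=\{a:a+K_H\subseteq H\}$ for the exponent set of $R:\omega$, the trace is the monomial ideal with exponent set $T=K_H+C\subseteq H$, and
\[
\ell_R(R/\tr_R(\omega_R))=|H\setminus T|.
\]
Every statement then becomes a counting problem; I also record the identity $g(H)-n(H)=2g(H)-\rmF(H)-1$, which is the quantity to be matched in part (a).

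For part (b)(i) I would run the conductor argument. Pick a birational Gorenstein subring $S\subseteq R$ with $\ell_S(R/S)=bg(R)=:b$ and set $\fkc=(S:R)\subseteq S\subseteq R$. Since $S$ is one-dimensional Gorenstein, $\omega_S\cong S$ and hence $\omega_R\cong\Hom_S(R,\omega_S)\cong\Hom_S(R,S)=\fkc$; as the trace depends only on the isomorphism class, $\tr_R(\omega_R)=\tr_R(\fkc)=\fkc\cdot(R:\fkc)\supseteq\fkc$, the last inclusion because $1\in R:\fkc$. Reflexivity over the Gorenstein ring $S$ gives $S:\fkc=R$ together with the length symmetry $\ell(S/\fkc)=\ell(R/S)$, so the sequence $0\to S/\fkc\to R/\fkc\to R/S\to 0$ yields $\ell(R/\fkc)=2b$ (lengths over $R$ and $S$ coincide, both rings having residue field $K$). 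Combining with $\fkc\subseteq\tr_R(\omega_R)\subseteq R$ gives at once $\ell_R(R/\tr_R(\omega_R))\le 2b$. The genuine content is to improve this by one under the hypothesis that $R$ is not Gorenstein: I would show that the inclusion $\fkc\subseteq\tr_R(\omega_R)$ is strict. Since $\tr_R(\omega_R)$ always contains the conductor $(R:\ol{R})$, it suffices to exhibit one monomial of $(R:\ol{R})$, equivalently a pseudo-Frobenius obstruction, lying outside $\fkc$; this forces $\ell(\tr_R(\omega_R)/\fkc)\ge1$ and hence $\ell_R(R/\tr_R(\omega_R))\le2b-1$. Proving this strict containment cleanly, and seeing that it is precisely the non-Gorenstein input (for a Gorenstein ring $\tr_R(\omega_R)=R$ and the whole scheme collapses), is the step I expect to be the main obstacle, and it is also what makes the bound sharp.

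For part (a)(i) I would derive, from the description $T=K_H+C$, an explicit upper estimate for $|H\setminus T|$ in terms of the finitely many generators of $\omega$, that is, in terms of $\PF(H)=\{x_1<\dots<x_t\}$. Because $\tr_R(\omega_R)\supseteq(R:\ol R)$, every element of $H\setminus T$ is a non-gap below $\rmF(H)$, so one only has to control which small non-gaps fail to lie in $K_H+C$. For $t\le 3$ the set $K_H$ is generated by at most three monomials, which pins down $C$ and $T$ tightly enough to compare $|H\setminus T|$ with $2g(H)-\rmF(H)-1$ through a finite case analysis on the relative positions of $x_1,x_2,x_3$; the type-$3$ case is where the bookkeeping is heaviest and is the technical heart of part (a). Finally, parts (a)(ii) and (b)(ii) are settled by explicit constructions. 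For (a)(ii) I would search among type-$5$ semigroups and display one for which the computed $|H\setminus T|$ exceeds $g(H)-n(H)$, verifying both sides by the count above. For (b)(ii) I would produce, for each $\ell\ge2$, a one-parameter family of semigroups whose minimal Gorenstein birational extension is explicit (giving $bg(R)=\ell$) and for which $|H\setminus T|=2\ell-1$, so that the inequality in (b)(i) is attained. The combined statement about Question~\ref{mainquest}(b) then follows, positively for $bg(R)\le1$ and negatively for $bg(R)\ge2$.
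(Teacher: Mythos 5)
There are genuine gaps in three of the four items. For part (a)(i), your plan is to bound $|H\setminus T|$ by ``a finite case analysis on the relative positions of $x_1,x_2,x_3$,'' but this is not an argument: the target quantity $g(H)-n(H)$ depends on all of $H$, not just on $\PF(H)$, and nothing in your sketch indicates why three pseudo-Frobenius numbers would pin down $|H\setminus T|$ well enough. The paper's proof runs through an entirely different, and essential, mechanism: with $C=t^{\rmF(H)}\omega_R$ one has $\ell_R(R/\tr_R(\omega_R))\le\ell_R(R/(R:C))$ and $g(H)-n(H)=\ell_R(C/R)$ (Lemma \ref{lemlem}); the type-$3$ case is then closed by showing $\omega_{R/(R:C)}\cong C^2/C$ is $3$-generated and invoking Gulliksen's theorem (Fact \ref{Gulliksen}) that a faithful module over an Artinian local ring of type at most $3$ has length at least that of the ring, applied to the faithful $R/(R:C)$-module $C/R$. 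That reduction-plus-Gulliksen step is the heart of Theorem \ref{thm} and is absent from your proposal. Similarly, (a)(ii) and (b)(ii) are existence statements, and you only promise to ``search'' and ``produce'' examples; the paper supplies concrete witnesses ($H=\langle 13,14,15,16,17,18,21,23\rangle$ of type $5$, and the explicit one-parameter family of Example \ref{counterex3}), and without such witnesses those items are unproved.

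For part (b)(i) your framework does match Proposition \ref{thm3.2}: setting $\fkc=S:R\cong\omega_R$ and using $\ell_S(S/\fkc)=\ell_S(R/S)$ gives $\ell_S(R/\tr_R(\omega_R))=2\ell_S(R/S)-\ell_S(\tr_R(\omega_R)/\fkc)$, and you correctly identify that everything hinges on the strict inclusion $\fkc\subsetneq\tr_R(\omega_R)$ when $R$ is not Gorenstein. But your proposed route to strictness --- exhibit a monomial of $R:\ol{R}$ lying outside $\fkc$ --- is unjustified: $R:\ol{R}\subseteq\tr_R(\omega_R)$ always holds, but there is no a priori reason that $R:\ol{R}\not\subseteq S:R$, so this does not yield the needed element, and you yourself flag it as the unresolved obstacle. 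The paper closes this gap by a structurally different argument: if $\tr_R(\omega_R)=S:R$ then $\tr_R(\omega_R)\cong\omega_R$, hence $\tr_R(\omega_R):\tr_R(\omega_R)\cong\Hom_R(\omega_R,\omega_R)\cong R$ is principal, which forces $R:\tr_R(\omega_R)=R$, then $\Ext_R^1(R/\tr_R(\omega_R),R)=0$ and Rees's lemma give $\tr_R(\omega_R)=R$, contradicting non-Gorensteinness. Without that (or an equivalent) argument, the improvement from $2bg(R)$ to $2bg(R)-1$ is not established.
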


In Section \ref{section2} and Section \ref{section3} we explore Question \ref{mainquest}(a) and Question \ref{mainquest}(b), respectively. In Section \ref{section4} we explore the invariants $\ell_R(R/\tr_R(\omega_R)) $, $bg(R)$, and $g(H)-n(H)$ in a special class of numerical semigroup rings, say, far-flung Gorenstein rings. 

Throughout this article, $R$ denotes a one-dimensional Cohen-Macaulay ring possessing a canonical module $\omega_R$. $\ol{R}$ (resp. $\rmQ(R)$) denotes the integral closure of $R$ (resp. the total ring of fraction). A finitely generated $R$-submodule $I$ of $\rmQ(R)$ is called a {\it fractional ideal} of $R$. For fractional ideals $I$ and $J$, there is a canonical isomorphism $\Hom_R(I, J) \cong J:I$, where the colon is considered in $\rmQ(R)$ (see \cite[Lemma 2.1]{HK}). $\ell_R(*)$ stands for the length and $\rmr(R)$ stands for the Cohen-Macaulay type of $R$.

\begin{acknowledgments}
The authors are grateful to Dumitru I. Stamate for giving useful comments to improve this paper. 
\end{acknowledgments}

\section{Bound of Question \ref{mainquest}(a)}\label{section2}
Throughout this section, unless otherwise stated, let $H$ be an additive subsemigroup of $\mathbb{N}=\{0, 1,2, \dots\}$ with $0\in H$ such that $\mathbb{N}\setminus H$ is finite.  Let $R=K[H]$ be a numerical semigroup ring of $H$. $\rmF(H)$ denotes the Frobenius number of $H$, that is, the largest integer in $\mathbb{N}\setminus H$. Let 
\begin{align*}
g(H)=|\mathbb{N}\setminus H| \quad \quad  \text{and} \quad \quad  
n(H)=|\{h\in H : h<\rmF(H)\}|
\end{align*}
be the number of {\it gaps} and the number of {\it non-gaps}, respectively. It is known that 
\[
\omega_R=\sum_{\alpha\in \mathbb{N}\setminus H} Rt^{-\alpha}
\]
is a graded canonical module of $R$ (\cite[Example (2.1.9)]{GW}). We call
\begin{align*}
\mathrm{PF}(H)=&\{ \alpha \in \mathbb{N}\setminus H : \text{$\alpha+h\in H$ for all  $h\in H$}\}\\
=&\{ \alpha \in \mathbb{N}\setminus H : \fkm t^\alpha\subseteq R\}
\end{align*}
the set of {\it pseudo-Frobenius numbers of $H$}, where $\fkm$ stands for the graded maximal ideal of $R$. It is also known that $\omega_R$ is minimally generated by pseudo-Frobenius numbers of $H$, that is, $r=|\mathrm{PF}(H)|$ is the Cohen-Macaulay type of $R$. Set 
\[
\mathrm{PF}(H)=\{\alpha_1< \alpha_2< \dots< \alpha_r=\rmF(H)\}.
\]  

The following observation provides a way to compute the trace of the canonical module.

\begin{observation} (see for example \cite[before Lemma 2.2]{HKS2}) 
Set 
\[
C=t^{F(H)}\omega_R=\sum_{\alpha\in \mathrm{PF}(H)} Rt^{\rmF(H)-\alpha}. 
\]
Then, we obtain that $R\subseteq C\subseteq \ol{R}=K[t]$, where $\ol{R}$ denotes the integral closure of $R$, since $\alpha_1< \alpha_2< \dots< \alpha_r=\rmF(H)$. We can describe the trace $\tr_R(\omega_R)$ of the canonical module by $(R:C)C$, where the colon is considered in the total ring $\rmQ(R)$ of fraction, since the map 
\[
\Hom_R (\omega_R, R)\otimes_R \omega_R \to R; f\otimes x \mapsto f(x) \quad \text{for $f\in \Hom_R (\omega_R, R)$ and $x\in \omega_R$}
\] 
is identified by $(R:C) \otimes_R C\to R$, where $f\otimes x\mapsto fx$ for $f\in C:R$ and $x\in C$ (see \cite{HK}).
\end{observation}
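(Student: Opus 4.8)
The plan is to reduce the computation of $\tr_R(\omega_R)$ to the fractional ideal $C$, which differs from $\omega_R$ only by the unit $t^{\rmF(H)}$ of $\rmQ(R)$, and then to invoke the canonical identification $\Hom_R(C,R)\cong R:C$ recalled in the introduction.

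First I would verify the containments $R\subseteq C\subseteq \ol{R}=K[t]$. Since $\mathrm{PF}(H)=\{\alpha_1<\dots<\alpha_r=\rmF(H)\}$, every $\alpha\in\mathrm{PF}(H)$ satisfies $\alpha\le\rmF(H)$, so each generator $t^{\rmF(H)-\alpha}$ of $C$ has nonnegative exponent and hence lies in $K[t]$; this gives $C\subseteq\ol{R}$. Taking $\alpha=\alpha_r=\rmF(H)$ shows $t^{0}=1\in C$, whence $R=R\cdot 1\subseteq C$. In particular $C$ is a fractional ideal of $R$ squeezed between $R$ and its normalization.

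Next, because $R$ is a one-dimensional domain with $\rmQ(R)=K(t)$, the element $t^{\rmF(H)}$ is a unit of $\rmQ(R)$, so multiplication by $t^{\rmF(H)}$ is an isomorphism of $R$-modules $\omega_R\xrightarrow{\ \sim\ }C$. By the very definition $\tr_R(M)=\sum_{f}\Im f$, the trace of a module is the image of the evaluation map $\Hom_R(M,R)\otimes_R M\to R$, $f\otimes x\mapsto f(x)$, and this image depends only on the isomorphism class of $M$ (for an isomorphism $\phi$ one has $\Im(f\circ\phi)=\Im f$ since $\phi$ is surjective). Hence $\tr_R(\omega_R)=\tr_R(C)$; equivalently, applying the identification $\Hom_R(\omega_R,R)\cong R:\omega_R$ one may compute $\tr_R(\omega_R)=(R:\omega_R)\omega_R$ directly and then substitute $\omega_R=t^{-\rmF(H)}C$, upon which the factors $t^{\pm\rmF(H)}$ cancel.

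It then remains to compute $\tr_R(C)$. Using the canonical isomorphism $\Hom_R(C,R)\cong R:C$, a homomorphism $f\colon C\to R$ is multiplication by some $c\in R:C$ inside $\rmQ(R)$, so the evaluation map is identified with $(R:C)\otimes_R C\to R$, $c\otimes x\mapsto cx$, whose image is precisely the product of fractional ideals $(R:C)C=\sum_{c\in R:C}cC$. Since this image equals $\tr_R(C)$, we conclude $\tr_R(\omega_R)=\tr_R(C)=(R:C)C$. I expect the only delicate point to be the bookkeeping: confirming that the isomorphism $\omega_R\cong C$ intertwines the two evaluation maps and the two instances of $\Hom_R(-,R)\cong R:(-)$, so that the images genuinely coincide; the cancellation of the units $t^{\pm\rmF(H)}$ is exactly what makes this compatibility transparent, and everything else is the definition of the trace together with the standard invariance of trace ideals under isomorphism.
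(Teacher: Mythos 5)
Your proposal is correct and follows essentially the same route as the paper's own justification: the containments $R\subseteq C\subseteq K[t]$ come from $\alpha\le\rmF(H)$ for all $\alpha\in\mathrm{PF}(H)$ with equality for $\alpha_r$, and the formula $\tr_R(\omega_R)=(R:C)C$ comes from replacing $\omega_R$ by its isomorphic copy $C=t^{\rmF(H)}\omega_R$ and identifying the evaluation map with $(R:C)\otimes_R C\to R$ via $\Hom_R(C,R)\cong R:C$. Your explicit remark that the trace ideal is invariant under isomorphism is exactly the point the paper leaves implicit, so nothing is missing.
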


In what follows, set $C=t^{F(H)}\omega_R$ as above. The following is known, but we include the proof for the convenience of the readers. 

\begin{lem} {\rm (cf. \cite{HK})} \label{Cdual}
Let $I$ and $J$ be fractional ideals of $R$, that is, finitely generated $R$-submodules of the total ring $\rmQ(R)$ of fraction. If $I\subseteq J$, then 
\[
\ell_R(J/I)=\ell_R((C:I)/(C:J)).
\]
\end{lem}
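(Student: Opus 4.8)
The plan is to realize the operation $(-)\mapsto(C:-)=\Hom_R(-,C)$ as the duality induced by the canonical module $C\cong\omega_R$ and to read off the claimed length equality from a single long exact $\Ext$-sequence. First I record two structural facts. Since $I\subseteq J$ are fractional ideals of the one-dimensional Cohen--Macaulay ring $R$, the quotient $J/I$ is a finitely generated torsion $R$-module and hence has finite length. Moreover each of $I,J$ is torsion-free (being a submodule of $\rmQ(R)$), and over a one-dimensional Cohen--Macaulay ring a finitely generated torsion-free module has $\depth\ge 1=\dim R$, so $I$ and $J$ are maximal Cohen--Macaulay; because $C$ is a canonical module of $R$, this yields $\Ext^i_R(I,C)=\Ext^i_R(J,C)=0$ for all $i\ge 1$.

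Next I apply $\Hom_R(-,C)$ to the short exact sequence $0\to I\to J\to J/I\to 0$. Using the natural identification $\Hom_R(I,C)\cong C:I$ (and likewise for $J$) recalled in the text, together with the vanishings $\Hom_R(J/I,C)=0$ (as $C$ is torsion-free and $J/I$ has finite length) and $\Ext^1_R(J,C)=0$, the long exact sequence collapses to
\[
0\to C:J\to C:I\to \Ext^1_R(J/I,C)\to 0,
\]
in which the first map is precisely the inclusion $C:J\hookrightarrow C:I$ induced by $I\subseteq J$. In particular $\ell_R\big((C:I)/(C:J)\big)=\ell_R\big(\Ext^1_R(J/I,C)\big)$.

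It remains to evaluate $\ell_R\big(\Ext^1_R(J/I,C)\big)$, and here the essential input is the defining duality property of the canonical module in dimension one: for a finite length module $N$ one has $\Ext^i_R(N,C)=0$ for $i\neq 1$ and $\Ext^1_R(N,C)\cong N^\vee$ is the Matlis dual of $N$, so that $\ell_R\big(\Ext^1_R(N,C)\big)=\ell_R(N)$. Applying this with $N=J/I$ gives $\ell_R\big(\Ext^1_R(J/I,C)\big)=\ell_R(J/I)$, which combined with the previous paragraph proves the lemma.

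The computation is entirely formal once these homological inputs are in place, so the only real point to be careful about is the duality theory of the canonical module, namely that $\Ext^{\ge 1}_R(-,C)$ annihilates maximal Cohen--Macaulay modules while $\Ext^1_R(-,C)$ restricts to the length-preserving Matlis duality on finite length modules. An essentially equivalent route that avoids explicitly invoking $\Ext^1_R(-,C)\cong(-)^\vee$ is to filter $J/I$ by a composition series, reduce to the case $\ell_R(J/I)=1$, and observe that there $\Ext^1_R(J/I,C)\cong\Ext^1_R(k,C)\cong k$ because the canonical module has type one; additivity of length along the resulting chain $C:J\subseteq\cdots\subseteq C:I$ then recovers the general statement.
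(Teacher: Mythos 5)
Your argument is correct and follows essentially the same route as the paper: dualize the short exact sequence $0\to I\to J\to J/I\to 0$ into $C$, identify the cokernel with $\Ext_R^1(J/I,C)$, and conclude by the duality for the canonical module (local duality / Matlis duality on finite-length modules). You merely spell out the vanishing statements ($\Ext^{\ge 1}_R(J,C)=0$ for $J$ maximal Cohen--Macaulay and $\Hom_R(J/I,C)=0$) that the paper compresses into the phrase ``since $C$ is a canonical module.''
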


\begin{proof} 
By considering the $C$-dual $\Hom_R(-, C) \cong C:-$ of the exact sequence $0 \to I \xrightarrow{\iota} J \to J/I \to 0$, we obtain that
\[
0\to C:J \xrightarrow{\iota} C:I \to \Ext_R^1 (J/I, C) \to 0
\]
since $C$ is a canonical module. Hence, $(C:I)/(C:J) \cong \Ext_R^1 (J/I, C)$. It follows that $\ell_R((C:I)/(C:J))=\ell_R (\Ext_R^1 (J/I, C))=\ell_R(J/I)$ by the local duality theorem (see for example \cite[Theorem 3.5.8]{BH}). 
\end{proof}

\begin{lem}\label{lemlem}
$g(H)-n(H)=\ell_R(C/R)$.
\end{lem}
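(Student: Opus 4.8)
The plan is to pass to exponent sets and reduce the statement to an elementary count of lattice points. Since $R\subseteq C\subseteq\ol R=K[t]$ are monomial (fractional) ideals, each is a direct sum of one-dimensional graded pieces $Kt^j$; write $E(M)=\{j\in\mathbb Z:t^j\in M\}$ for the corresponding exponent set. Because $C/R$ is a finite-length graded $R$-module with $R/\fkm\iso K$, every composition factor is $\iso K$ up to shift, so $\ell_R(C/R)=\dim_K(C/R)=|E(C)\setminus H|$, using $E(R)=H\subseteq E(C)$. Thus it suffices to determine $E(C)$ and count.

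The key step is to show
\[
E(C)=\{\,j\in\mathbb Z : \rmF(H)-j\notin H\,\}.
\]
Writing $F=\rmF(H)$ and using $C=t^{F}\omega_R=\sum_{\alpha\in\mathbb N\setminus H}Rt^{F-\alpha}$, a typical exponent of $C$ has the form $j=h+F-\alpha$ with $h\in H$ and $\alpha\in\mathbb N\setminus H$. For the inclusion ``$\subseteq$'' I would argue by contradiction: if $F-j=\alpha-h$ were in $H$, then $\alpha=h+(\alpha-h)$ would lie in $H$ since $H$ is closed under addition, contradicting $\alpha\notin H$. For ``$\supseteq$'', given $F-j\notin H$, either $F-j\ge 0$, in which case $\alpha:=F-j\in\mathbb N\setminus H$ and $t^{j}=t^{F-\alpha}\in C$, or $F-j<0$, in which case $j>F$ forces $j\in H$ and $t^j\in R\subseteq C$. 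I expect this identification---and in particular the use of the semigroup (additive-closure) property to rule out $\alpha-h\in H$---to be the real content of the lemma; everything else is bookkeeping.

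With $E(C)$ in hand I would finish by additivity of length along $R\subseteq C\subseteq\ol R$, computing the two outer colengths separately. On one side, $E(\ol R)=\mathbb N$ gives $\ell_R(\ol R/R)=|\mathbb N\setminus H|=g(H)$. On the other side, for $j\ge 0$ the condition $j\notin E(C)$ means exactly $F-j\in H$; as $j$ runs over $0\le j\le F$ the value $F-j$ runs bijectively over $\{h\in H:h\le F\}=\{h\in H:h<F\}$ (note $F\notin H$), so $\ell_R(\ol R/C)=|\mathbb N\setminus E(C)|=n(H)$. Hence
\[
\ell_R(C/R)=\ell_R(\ol R/R)-\ell_R(\ol R/C)=g(H)-n(H),
\]
as claimed. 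Equivalently, one can avoid the intermediate ring $\ol R$ and count $|E(C)\setminus H|=|\{k : 0\le k\le F,\ k\notin H,\ F-k\notin H\}|$ directly by inclusion--exclusion; the point is again that $k,F-k\in H$ is impossible, so the ``both non-gap'' term vanishes and the same identity $g(H)-n(H)$ drops out.
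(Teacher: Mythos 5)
Your proof is correct, but it takes a genuinely different route from the paper's. The paper never touches exponent sets: it applies the duality Lemma~\ref{Cdual} (i.e.\ $\Hom_R(-,C)\cong C:-$ together with local duality) to the inclusion $R\subseteq \ol R$ to get $g(H)=\ell_R(\ol R/R)=\ell_R\bigl((C:R)/(C:\ol R)\bigr)$, then uses the colon identities $C:R=C$ and $C:\ol R=R:\ol R$ to rewrite this as $\ell_R\bigl(C/(R:\ol R)\bigr)$, and finally subtracts $n(H)=\ell_R\bigl(R/(R:\ol R)\bigr)$. You instead identify $E(C)=\{j:\rmF(H)-j\notin H\}$ directly and count; the only structural input is the additive closure of $H$ (to rule out $\alpha-h\in H$ and $k,\rmF(H)-k\in H$ simultaneously), and the bookkeeping steps (length equals $K$-dimension for graded modules with residue field $K$, additivity along $R\subseteq C\subseteq\ol R$) are all sound. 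Your argument is more elementary and self-contained --- it needs neither Lemma~\ref{Cdual}, nor local duality, nor the identities from Herzog--Kunz --- but it is tied to the monomial/semigroup setting, whereas the paper's duality argument is the one that transports to general one-dimensional Cohen--Macaulay rings (and indeed the same mechanism reappears in Claim~\ref{claim1} and Proposition~\ref{prop4.3}). Note also that your description of $E(C)$ is essentially the standard canonical ideal $K(H)=\{\rmF(H)-z : z\in\mathbb Z\setminus H\}$ of the numerical semigroup, so your computation recovers a known combinatorial fact rather than the homological one the authors lean on.
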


\begin{proof}
Note that $g(H)=\ell_R (\ol{R}/R)$ and $n(H)=\ell_R(R/(R:\ol{R}))$ by definitions. By Lemma \ref{Cdual}, $\ell_R (\ol{R}/R)=\ell_R ((C:R)/ (C:\ol{R}))$. Furthermore, we obtain that 
\[
C:R=C \quad \text{and} \quad R:\ol{R} =(C:C):\ol{R}=C:C\ol{R}=C:\ol{R},
\]
see \cite[page 19]{HK}. Hence, $\ell_R (\ol{R}/R)=\ell_R(C/(R:\ol{R}))$. 
Thus,
\[
g(H)-n(H)=\ell_R(C/(R:\ol{R})) - \ell_R(R/(R:\ol{R}))= \ell_R(C/R).
\]
\end{proof}

We use the following Gulliksen's result to prove Theorem \ref{thm}:

\begin{fact} (\cite[Theorem 1]{G}) \label{Gulliksen}
Let $A$ be an Artinian local ring and $M$ be a finitely generated faithful $A$-module. Suppose that the Cohen-Macaulay type $\rmr(A)$ of $A$ is at most $3$. Then, $\ell_A(M) \ge \ell_A(A)$ holds.
\end{fact}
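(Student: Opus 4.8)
This is Gulliksen's theorem, which we invoke rather than prove; we only indicate how one might approach it. The plan is to pass to Matlis duality and recast the claim as a statement about how the injective hull $E=E_A(k)$ of the residue field $k=A/\fkm$ can be generated. Recall that for Artinian $A$ one has $\ell_A(E)=\ell_A(A)$ and that the minimal number of generators $\mu(E)=\dim_k\Soc(A)=\rmr(A)=r$. Write $(-)^\vee=\Hom_A(-,E)$ for the Matlis dual, so that $N:=M^\vee$ satisfies $\ell_A(N)=\ell_A(M)$ and is again faithful. Since $M$ is faithful, a choice of generators $m_1,\dots,m_n$ of $M$ yields an embedding $A\hookrightarrow M^n$, $a\mapsto(am_1,\dots,am_n)$, whose kernel is $\Ann_A(M)=0$; dualizing gives a surjection $N^n\twoheadrightarrow A^\vee=E$. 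Because $\mu(E)=r$, Nakayama's lemma lets me discard all but $r$ of the component maps $f_1,\dots,f_r\colon N\to E$ while keeping $U_1+\dots+U_r=E$, where $U_i:=\Im f_i$. Each $U_i$ is a quotient of $N$, so $\ell_A(U_i)\le\ell_A(N)=\ell_A(M)$, and the target inequality $\ell_A(M)\ge\ell_A(A)=\ell_A(E)$ becomes a question about $r\le 3$ submodules of $E$ summing to $E$.

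First I would dispose of the case $r=1$, that is, $A$ Gorenstein. Then $A\cong E$ is injective, so the embedding $A\hookrightarrow M^n$ splits; as $A$ is indecomposable, the Krull--Schmidt theorem forces $A$ to be a direct summand of $M$ itself, and hence $\ell_A(M)\ge\ell_A(A)$.

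For $r\in\{2,3\}$ the structural input is that $E$ has simple socle $\Soc(E)=k$ and is uniform, so every nonzero $U_i$ contains $\Soc(E)$ and the $U_i$ therefore meet pairwise. Under the order-reversing correspondence $U\mapsto\Ann_A(U)$ between submodules of $E$ and ideals of $A$, which satisfies $\ell_A(U)=\ell_A(A/\Ann_A(U))$, the relation $\sum_iU_i=E$ translates into $\bigcap_i\Ann_A(U_i)=0$. I would then argue by induction on $\ell_A(A)$, factoring out a socle element of $A$ and tracking the behaviour of the at most three ideals $\Ann_A(U_i)$, using the simple-socle constraint to ensure the inductive estimate does not degrade by a full unit at each step.

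The main obstacle is precisely this last point. Submodularity of length gives only the crude bound $\ell_A(E)=\ell_A(\sum_iU_i)\le\sum_i\ell_A(U_i)\le r\,\ell_A(M)$, i.e. $\ell_A(M)\ge\ell_A(A)/r$, and closing the factor-of-$r$ gap up to $\ell_A(M)\ge\ell_A(A)$ is the entire difficulty: it cannot be achieved by counting lengths alone but must exploit the module structure of $N$ together with the hypothesis $r\le 3$. This is exactly where the restriction on the Cohen--Macaulay type is indispensable and where the method breaks down for larger $r$, so I expect the delicate analysis of the configuration of the submodules $U_1,U_2,U_3$ inside the uniform module $E$ to be the heart of the argument.
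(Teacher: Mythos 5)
The paper offers no proof of this statement: it is labelled a Fact and cited directly from Gulliksen \cite[Theorem 1]{G}, so your decision to invoke it rather than prove it is exactly what the authors do, and there is no argument in the paper to compare yours against. Your preliminary reduction is correct as far as it goes: for Artinian local $A$ one indeed has $\ell_A(E)=\ell_A(A)$ and $\mu(E)=\dim_k\Soc(A)=\rmr(A)$ for $E=E_A(k)$; faithfulness of $M$ dualizes to a surjection $N^n\twoheadrightarrow E$ with $N=M^\vee$; Nakayama applied to the images in $E/\fkm E$ lets you keep only $r$ of the maps; and the case $r=1$ is then immediate (even more simply than via Krull--Schmidt: $E$ is a quotient of $N$, so $\ell_A(M)=\ell_A(N)\ge\ell_A(E)=\ell_A(A)$).

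However, as a proof the proposal has a genuine gap, and you have located it yourself: for $r=2,3$ nothing beyond the submodularity bound $\ell_A(M)\ge\ell_A(A)/r$ is actually established, and the observation that the $U_i$ pairwise meet in the simple socle of the uniform module $E$ is not by itself enough to run the proposed induction --- the ``inductive estimate does not degrade'' step is precisely the content of Gulliksen's theorem and is left unproved. Since the result genuinely fails for large type (Example \ref{counterex} in the paper exploits a type-$5$ ring precisely because no such bound is available there), the restriction $r\le 3$ must enter in an essential combinatorial way that your sketch does not supply. In the context of this paper that is acceptable --- the statement is an external citation --- but the sketch should not be mistaken for a proof of the cases that matter in Theorem \ref{thm}, namely $r=2$ and $r=3$.
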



\begin{Theorem}\label{thm}
If the Cohen-Macaulay type $r=\rmr(R)$ of $R$ is at most $3$, then we have 
\[
\ell_R(R/\tr_R(\omega_R)) \le g(H)-n(H).
\]
\end{Theorem}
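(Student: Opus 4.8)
The plan is to derive the bound from one application of Gulliksen's inequality (Fact~\ref{Gulliksen}), after a short reduction. By Lemma~\ref{lemlem} the asserted inequality reads $\ell_R(R/\tr_R(\omega_R)) \le \ell_R(C/R)$, so it suffices to prove this. Put $\fkc = R:C$; since $R \subseteq C$ this is an ideal of $R$, and by the Observation $\tr_R(\omega_R) = \fkc C$. As $1 \in C$ we have $\fkc = \fkc\cdot 1 \subseteq \fkc C = \tr_R(\omega_R) \subseteq R$, whence $\ell_R(R/\tr_R(\omega_R)) \le \ell_R(R/\fkc)$. Thus the theorem follows once we show
\[
\ell_R(R/\fkc) \le \ell_R(C/R).
\]

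First I would set up the Artinian module for Gulliksen. We may assume $R$ is not Gorenstein, so that $0 \ne \fkc \ne R$ and $A := R/\fkc$ is a nonzero Artinian local ring. The module $C/R$ is finitely generated, and
\[
\Ann_R(C/R) = \{x \in R : xC \subseteq R\} = R:C = \fkc,
\]
so $C/R$ is a \emph{faithful} $A$-module. Consequently, as soon as we know $\rmr(A) \le 3$, Fact~\ref{Gulliksen} yields $\ell_R(C/R) = \ell_A(C/R) \ge \ell_A(A) = \ell_R(R/\fkc)$, which is exactly the inequality we need. Hence the entire theorem reduces to the single claim
\[
\rmr\bigl(R/(R:C)\bigr) \le \rmr(R) = r .
\]

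The proof of this claim is the heart of the matter, and it is where the hypothesis $r \le 3$ is genuinely used (the bound must fail for large $r$, in accordance with Example~\ref{counterex}). It cannot follow from formal properties of quotients alone, since $\rmr(R/\fkm^2)$ already exceeds $\rmr(R)$ in general; one must exploit the arithmetic of $\fkc = R:C$. I would compute $\Soc(A) = (\fkc :_R \fkm)/\fkc$ combinatorially. Writing $\fkc = \{a \in \mathbb N : a + \rmF(H) - \alpha \in H \text{ for all } \alpha \in \mathrm{PF}(H)\}$, a monomial $t^m$ with $m \in H$ spans a nonzero element of $\Soc(A)$ precisely when $m \notin \fkc$ but $m + h \in \fkc$ for all $h \in H$ with $h>0$. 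Setting $v = m + \rmF(H)$, these conditions translate into: $v - \alpha \in H \cup \mathrm{PF}(H)$ for every $\alpha \in \mathrm{PF}(H)$, with $v - \alpha \in \mathrm{PF}(H)$ for at least one $\alpha$. Thus each socle class determines a nonempty set $P(v) = \{\alpha \in \mathrm{PF}(H) : v - \alpha \in \mathrm{PF}(H)\}$, symmetric under $\alpha \mapsto v-\alpha$, and distinct socle classes have distinct values $v$.

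The concluding step, and the main obstacle, is to turn this into an injection $\Soc(A) \hookrightarrow \mathrm{PF}(H)$, giving $\rmr(A) = \dim_K \Soc(A) \le |\mathrm{PF}(H)| = r$. The natural candidate is $v \mapsto \max P(v)$. To prove injectivity, suppose two socle values $v < v'$ shared an image $\alpha_k = \max P(v) = \max P(v')$, and set $\beta = v - \alpha_k \in \mathrm{PF}(H)$. Applying the socle condition for $v'$ to the pseudo-Frobenius number $\beta$ forces $v' - \beta \in H \cup \mathrm{PF}(H)$; if $v' - \beta \in \mathrm{PF}(H)$, then $v' - \beta > \alpha_k$ lies in $P(v')$, contradicting maximality. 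The remaining possibility $v' - \beta \in H$ is exactly where $r \le 3$ must be invoked: I expect that tracking $\beta$, $\beta' = v' - \alpha_k$, $\alpha_k$ and their respective $v$- and $v'$-partners forces a \emph{fourth} pseudo-Frobenius number, which is impossible when $r \le 3$. Carrying out this case analysis cleanly — most plausibly organized by $r \in \{2,3\}$ — is the delicate part; once it is done, Fact~\ref{Gulliksen} applies and the theorem follows.
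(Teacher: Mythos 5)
Your overall strategy coincides with the paper's for the case $r=3$: reduce to $\ell_R(R/(R:C))\le\ell_R(C/R)$, observe that $C/R$ is a faithful module over $A=R/(R:C)$, and invoke Gulliksen (Fact~\ref{Gulliksen}) once one knows $\rmr(A)\le 3$. The problem is that this last point --- the only place where any real work is needed --- is exactly what you do not prove. Your combinatorial route via $\Soc(A)$ ends with ``I expect that tracking \dots forces a fourth pseudo-Frobenius number'' and an acknowledgement that ``carrying out this case analysis cleanly \dots is the delicate part.'' That is the heart of the theorem, not a routine verification, so as it stands the argument has a genuine gap. Moreover, the target you set yourself, $\rmr(R/(R:C))\le\rmr(R)$, is both stronger than what you need (you only need $\rmr(A)\le 3$) and not obviously true; it is at least not what the paper's method yields for general $r$.

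The paper closes this gap with a short homological computation that you should compare with your socle approach. Dualizing $0\to R:C\to R\to A\to 0$ into $C$ and using $C:R=C$, $C:(R:C)=C:(C:C^2)=C^2$, one gets
\[
\omega_A\;\cong\;\Ext_R^1(A,C)\;\cong\;C^2/C .
\]
Since $r=3$ allows one to write $C=\langle 1,t^a,t^b\rangle$, the quotient $C^2/C$ is generated by the images of $t^{2a}$, $t^{a+b}$, $t^{2b}$, hence $\rmr(A)=\mu(\omega_A)\le 3$, and Gulliksen applies. (In general this argument bounds $\rmr(A)$ by $\binom{r}{2}$, which explains why it is confined to $r\le 3$.) Note also that the case $r=2$ does not need Gulliksen at all: there $C/R$ is cyclic with annihilator $R:C$, so $\ell_R(R/(R:C))=\ell_R(C/R)$ directly. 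If you want to salvage your socle computation, you would need to actually produce the injection $\Soc(A)\hookrightarrow\mathrm{PF}(H)$ (or a bound $\dim_K\Soc(A)\le 3$) with a complete case analysis; until then the proof is incomplete.
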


\begin{proof}
Since $R:C\subseteq (R:C)C= \tr_R(\omega_R)$, we have $\ell_R(R/\tr_R(\omega_R)) \le \ell_R (R/(R:~C))$. 
If $r=2$, then it follows that $\ell_R(R/\tr_R(\omega_R)) \le \ell_R (R/(R:~C))=\ell_R(C/R)$ because $C/R$ is cyclic and the annihilator of $C/R$ is $R:C$. Hence, by Lemma \ref{lemlem}, we have $\ell_R(R/\tr_R(\omega_R)) \le g(H)-n(H)$. 

Suppose that $r=3$. By considering the $C$-dual $\Hom_R(-, C)=C:-$ of the exact sequence $0 \to R:C \xrightarrow{\iota} R \to R/(R:C) \to 0$,
we obtain the exact sequence
\[
0 \to C:R \xrightarrow{\iota} C:(R:C) \to \Ext_R^1(R/(R:C), C) \to 0.
\]
Note that $C:R=C:(C:C)=C$ and $C:(R:C)=C:((C:C):C)=C:~(C:~C^2)=C^2$. 
Hence, according to \cite[Theorem 3.3.7(b)]{BH}, we obtain that 
\[
\omega_{R/(R:C)} = \Ext_R^1(R/(R:C), C) \cong C^2/C.
\]
Because $r=3$, we can choose integers $0<a<b$ such that $C=\langle 1, t^a, t^b\rangle$. Then, 
\[
C^2=\langle 1, t^a, t^b, t^{2a}, t^{a+b}, t^{2b}\rangle.
\]
Therefore, $C^2/C$ is generated by $t^{2a}, t^{a+b}, t^{2b}$; hence, the Cohen-Macaulay type of $R/(R:C)$ is at most $3$. By noting that $C/R$ is a faithful $R/(R:C)$-module, Fact \ref{Gulliksen} proves that $\ell_R(R/(R:C)) \le \ell_R(C/R)$. It follows that $\ell_R(R/\tr_R(\omega_R))\le \ell_R(R/(R:~C)) \le \ell_R(C/R) = g(H)-n(H)$.
\end{proof}

In contrast to Theorem \ref{thm}, there exists a counterexample for Question \ref{mainquest}(a) when $r=5$:

\begin{ex}\label{counterex}
Let $H=\langle 13, 14, 15, 16, 17, 18, 21, 23\rangle$ and $R=K[H]$. Then, we can check that $\tr_R(\omega_R)=R:\ol{R}=t^{26}\ol{R}$ (\cite[Example 5.4(iii)]{HKS2}). Hence, $g(H)=17$, $n(H)=9$, and $\ell_R(R/\tr_R(\omega_R))=9$. Thus, we obtain that 
\[
\ell_R(R/\tr_R(\omega_R))=9 > 8=17-9=g(H)-n(H).
\]
\end{ex}

Example \ref{counterex} arises from far-flung Gorenstein rings. We later consider far-flung Gorenstein numerical semigroup rings, see Section \ref{section4}.

\begin{question} 
How about the case of $\rmr(R)=4$?
\end{question}

\section{Bound of Question \ref{mainquest}(b)}\label{section3}

In this section we consider the inequality of Question \ref{mainquest}(b). The inequality may originate from Ananthnarayan's result. For an Artininan local ring $A$, Ananthnarayan \cite[Definition 1.2]{A} defined the {\it Gorenstein colength} as 
\[
g(A)=\inf \{\ell_S(S)-\ell_S(A) : \text{$S$ is an Artinian Gorenstein local ring mapping onto $A$}\}.
\]
Then, he proved that $\ell_A(A/\tr_A(\omega_A)) \le g(A)$ (\cite[Corollary 3.8]{A}). 

As an analogue of the result for a one-dimensional Cohen-Macaulay local ring $R$, Kobayashi proposed Question \ref{mainquest}(b). Here, note that he did not assume that $R$ is a numerical semigroup ring. Thus, for a while, we only suppose that $(R, \fkm)$ is a one-dimensional Cohen-Macaulay local ring possessing the canonical module $\omega_R$. We call an extension $S\subseteq R$ of rings {\it birational} if $\rmQ(S)=\rmQ(R)$ and $R$ is finitely generated as an $S$-module.
Then, we call the non-negative integer
\[
bg(R)=\inf \left\{\ell_S(R/S) : \begin{matrix} \text{$S$ is a Gorenstein ring such that} \\
\text{$S\subseteq R$ is a birational extension}
\end{matrix}
\right\}
\]
the {\it birational Gorenstein colength} (see \cite[Definition 1.3]{Ko}). 

\begin{prop} \label{thm3.2}
Let $(R, \fkm)$ be a one-dimensional non-Gorenstein Cohen-Macaulay local ring having the canonical module $\omega_R$. Let $S$ be a Gorenstein ring such that $S\subseteq R$ is a birational extension. Then, 
\[
\ell_S(R/\tr_R(\omega_R)) = 2 \ell_S(R/S) - \ell_S(\tr_R(\omega_R)/(S:R)).
\]
\end{prop}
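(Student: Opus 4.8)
The plan is to realize the canonical module concretely as the conductor of the extension $S\subseteq R$, and then to reduce the claimed identity to additivity of length along two chains of fractional ideals that share the same endpoints.

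First I would record the key identification. Since $S$ is Gorenstein of dimension one it is its own canonical module, and since $S\subseteq R$ is a finite birational extension the canonical module of $R$ is computed by $\omega_R\cong \Hom_S(R,\omega_S)=\Hom_S(R,S)\cong S:R$, the colon being taken in $\rmQ:=\rmQ(S)=\rmQ(R)$ (see \cite{HK}). Write $\omega=S:R$ for this fractional ideal. Because $1\in R$ we have $\omega=S:R\subseteq S:S=S$, so $\omega$ is in fact the conductor of $S\subseteq R$ and we obtain the chain $\omega\subseteq S\subseteq R$. Using the identification $\Hom_R(\omega_R,R)\cong R:\omega$ together with the trace formula $\tr_R(\omega_R)=(R:\omega)\omega$ (see \cite{HHS} and the Observation), the trace is an ideal of $R$; moreover $1\in R:\omega$ since $\omega\subseteq R$, so $\omega=1\cdot\omega\subseteq \tr_R(\omega_R)\subseteq R$. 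This gives a second chain $\omega\subseteq \tr_R(\omega_R)\subseteq R$ with the same endpoints $\omega$ and $R$ as the first.

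Next I would establish the single nontrivial numerical input, namely $\ell_S(S/\omega)=\ell_S(R/S)$. This is exactly Lemma \ref{Cdual} applied to the ring $S$ in place of $R$, which is legitimate precisely because $S$ is its own canonical module: dualizing the inclusion $S\subseteq R$ of fractional ideals of $S$ yields
\[
\ell_S(R/S)=\ell_S\bigl((S:S)/(S:R)\bigr)=\ell_S(S/\omega).
\]

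Finally I would compute $\ell_S(R/\omega)$ along each of the two chains. Along $\omega\subseteq S\subseteq R$, additivity together with the previous step gives
\[
\ell_S(R/\omega)=\ell_S(R/S)+\ell_S(S/\omega)=2\,\ell_S(R/S),
\]
while along $\omega\subseteq \tr_R(\omega_R)\subseteq R$ additivity gives
\[
\ell_S(R/\omega)=\ell_S(R/\tr_R(\omega_R))+\ell_S(\tr_R(\omega_R)/\omega).
\]
Equating the two expressions and recalling $\omega=S:R$ yields the asserted identity
\[
\ell_S(R/\tr_R(\omega_R))=2\,\ell_S(R/S)-\ell_S(\tr_R(\omega_R)/(S:R)).
\]
The only real content lies in the first paragraph: once the canonical module is identified with the conductor and the trace is located between $\omega$ and $R$, the statement is pure additivity of length. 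The points I expect to need the most care are the inclusion chain for the trace (in particular $\omega\subseteq\tr_R(\omega_R)$ and $\tr_R(\omega_R)\subseteq R$) and the justification that Lemma \ref{Cdual} applies to $S$; I note that the non-Gorenstein hypothesis on $R$ is not needed for the identity itself and only serves to make the ensuing length comparison meaningful.
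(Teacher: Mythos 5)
Your proposal is correct and follows essentially the same route as the paper: the same duality computation $\ell_S(S/(S:R))=\ell_S(R/S)$ (the paper's Claim \ref{claim1}) combined with additivity of length over the two chains from $S:R$ to $R$ (the paper phrases this as inclusion--exclusion on the diamond of inclusions). Your explicit justification that $S:R\subseteq\tr_R(\omega_R)\subseteq R$ is a welcome detail the paper leaves implicit.
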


\begin{proof}
Note that $S$ is a local ring of $\dim S=\dim R =1$, because $R$ is finitely generated as an $S$-module. Furthermore, we obtain that $\omega_R \cong \Hom_R(R, S)$ by \cite[Theorem 3.3.7(b)]{BH}. $\Hom_S(R, S)\cong S:R \subseteq R$. 

\begin{claim}\label{claim1}
$\ell_S(S/(S:R))=\ell_S(R/S)$.
\end{claim}

\begin{proof}[Proof of Claim \ref{claim1}]
By applying the $S$-dual $\Hom_S(-, S)=S:-$ to the exact sequence 
$0 \to S \to R \to R/S \to 0$ of $S$-modules, we obtain that 
\[
0 \to S:R \to S \to \Ext_S^1(R/S, S) \to 0.
\]
Hence, by noting that $\omega_S=S$ since $S$ is Gorenstein, $\ell_S(S/(S:R)) =\ell_S(\Ext_S^1(R/S, S)) = \ell_S(R/S)$ by the local duality theorem (\cite[Theorem 3.5.8]{BH}). 
\end{proof}

Therefore, by looking at the following inclusions
\begin{small} 
\[
\xymatrix{
& R   &  \\
S \ar@{-}[ur] &  & \tr_R(\omega_R) \ar@{-}[ul]\\
& S:R, \ar@{-}[ur] \ar@{-}[ul] 
}
\]
\end{small}
we obtain that 
\begin{align*}
\ell_S(R/\tr_R(\omega_R)) =& \ell_S(R/S) + \ell_S(S/(S:R)) - \ell_S(\tr_R(\omega_R)/(S:R))\\
=&2 \ell_S(R/S) -\ell_S(\tr_R(\omega_R)/(S:R)).
\end{align*}
\end{proof}

\begin{cor} \label{corthm3.2}
Let $(R, \fkm)$ be a one-dimensional non-Gorenstein Cohen-Macaulay local ring having the canonical module $\omega_R$. Then, 
\[
\ell_R(R/\tr_R(\omega_R)) \le 2 bg(R) -1
\]
holds.
\end{cor}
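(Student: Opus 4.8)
The plan is to deduce Corollary \ref{corthm3.2} directly from Proposition \ref{thm3.2}. The proposition already expresses the quantity we want to bound as
\[
\ell_S(R/\tr_R(\omega_R)) = 2\ell_S(R/S) - \ell_S(\tr_R(\omega_R)/(S:R))
\]
for any Gorenstein birational subring $S\subseteq R$. The strategy is to pick $S$ achieving the infimum defining $bg(R)$ (so that $\ell_S(R/S)=bg(R)$, using that the colength is a non-negative integer so the infimum is attained), and then to show that the correction term $\ell_S(\tr_R(\omega_R)/(S:R))$ is at least $1$. Granting this, the displayed identity gives $\ell_S(R/\tr_R(\omega_R)) \le 2bg(R)-1$, and since $\ell_R$ and $\ell_S$ of an $R$-module agree (the extension is birational, so $S$ and $R$ have the same residue field up to the module structure — more precisely lengths over $S$ and over $R$ of a module of finite length coincide because both rings are local one-dimensional with the same total ring of fractions), this yields the desired $\ell_R$-bound.

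The crux is therefore to establish the strict inequality $\ell_S(\tr_R(\omega_R)/(S:R)) \ge 1$, equivalently $(S:R) \subsetneq \tr_R(\omega_R)$. First I would record the inclusion $(S:R)\subseteq \tr_R(\omega_R)$, which is visible from the diagram in the proof of Proposition \ref{thm3.2} (both $S$ and $\tr_R(\omega_R)$ sit above $S:R$, and $S:R = \Hom_S(R,S)$ lands inside $R$). The key point is that this inclusion is proper. The natural way to see this is to argue that if equality held, then combined with the identity in Proposition \ref{thm3.2} we would get $\ell_S(R/\tr_R(\omega_R)) = 2\ell_S(R/S)$; I expect one can instead argue more structurally: $S:R$ is the conductor of $R$ into the Gorenstein ring $S$, and $\tr_R(\omega_R)$ is a trace ideal of $R$, so the two ideals play genuinely different roles. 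The cleanest argument is probably to show $\tr_R(\omega_R) \not\subseteq S$ would fail, or rather to exploit that $R$ is \emph{not} Gorenstein: if $(S:R)=\tr_R(\omega_R)$ then $\tr_R(\omega_R)$ would be an ideal of $S$ as well, and one can test this against the Gorenstein (hence self-dual) structure of $S$ to force $R=S$, contradicting $R$ non-Gorenstein.

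Concretely, I would proceed as follows. Choose a Gorenstein birational $S$ with $\ell_S(R/S)=bg(R)$. Since $R$ is not Gorenstein we have $R\ne S$, so $\ell_S(R/S)=bg(R)\ge 1$. The inclusion $(S:R)\subseteq \tr_R(\omega_R)$ holds, so $\ell_S(\tr_R(\omega_R)/(S:R))\ge 0$ and the identity already gives $\ell_S(R/\tr_R(\omega_R)) \le 2bg(R)$. To improve the bound by one I would show the correction term is nonzero. Suppose for contradiction that $\tr_R(\omega_R)=(S:R)$. Then $\tr_R(\omega_R)$ is simultaneously a trace ideal of $R$ and the conductor $S:_S R$, which is the largest common ideal of $S$ and $R$; in particular $\tr_R(\omega_R)\subseteq S$. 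But $\tr_R(\omega_R)$ contains the unit $1$ precisely when $R$ is Gorenstein, and more usefully its reflexivity/idempotent trace property combined with membership in $S$ forces $\tr_R(\omega_R)\cdot R \subseteq S$, i.e. $R\subseteq S:\tr_R(\omega_R)$; chasing this against $S$ Gorenstein yields $R=S$, contradicting non-Gorensteinness.

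The main obstacle I anticipate is the last step — verifying that $(S:R)$ is \emph{strictly} contained in $\tr_R(\omega_R)$ rather than merely contained. The inequality $\ell_S(R/\tr_R(\omega_R))\le 2bg(R)$ is immediate, but squeezing out the extra $-1$ requires genuinely using that $R$ is non-Gorenstein together with some self-duality of the Gorenstein ring $S$ (for instance that $\omega_S\cong S$ and that the conductor behaves well under the duality $\Hom_S(-,S)=S:-$). The cleanest route is likely to apply Lemma \ref{Cdual}-style duality over $S$: since $\tr_R(\omega_R)$ is a proper ideal of $R$ (as $R$ is not Gorenstein) and $S:R$ is the conductor, a dimension/length count via the $S$-duality must leave at least one unit of length between them. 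I would make this precise by showing $S:R \subseteq \mathfrak{m}_S \cdot \tr_R(\omega_R)$ or by a direct comparison of the two ideals in $\mathrm{Q}(R)$, which should be a short colon-ideal manipulation once the Gorenstein hypothesis on $S$ is brought to bear.
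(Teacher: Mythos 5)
Your overall architecture matches the paper's: invoke Proposition \ref{thm3.2} for an $S$ attaining $bg(R)$, note $\ell_R(R/\tr_R(\omega_R))\le\ell_S(R/\tr_R(\omega_R))$, and reduce everything to showing the correction term $\ell_S(\tr_R(\omega_R)/(S:R))$ is at least $1$. (One small inaccuracy along the way: $\ell_R$ and $\ell_S$ of a finite-length module need not \emph{coincide} — they differ by the residue field degree $[R/\fkm : S/(\fkm\cap S)]$ — but the inequality $\ell_R\le\ell_S$, which is all you need, does hold.)

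The genuine gap is exactly the step you flag as "the main obstacle": you never actually prove that $(S:R)\subsetneq \tr_R(\omega_R)$. Your proposed contradiction argument — from $\tr_R(\omega_R)=S:R$ deduce $\tr_R(\omega_R)\cdot R\subseteq S$, i.e.\ $R\subseteq S:\tr_R(\omega_R)$, and "chase this against $S$ Gorenstein to get $R=S$" — does not work: the inclusion $R\subseteq S:(S:R)$ is automatic from the definition of the conductor and yields no contradiction, and the remark that "$\tr_R(\omega_R)$ contains $1$ precisely when $R$ is Gorenstein" is not brought to bear on the hypothesis $\tr_R(\omega_R)=S:R$ in any usable way. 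The missing idea is the identification $S:R\cong\Hom_S(R,S)\cong\Hom_S(R,\omega_S)\cong\omega_R$, which converts the hypothesis $\tr_R(\omega_R)=S:R$ into $\tr_R(\omega_R)\cong\omega_R$; the paper then shows that a trace ideal isomorphic to $\omega_R$ forces $R$ to be Gorenstein, via $R\cong\Hom_R(\omega_R,\omega_R)\cong\tr_R(\omega_R):\tr_R(\omega_R)$, the observation that this cyclic module is generated by a unit so equals $R$, Lindo's identity $R:\tr_R(\omega_R)=\tr_R(\omega_R):\tr_R(\omega_R)$ for trace ideals, the resulting vanishing of $\Ext_R^1(R/\tr_R(\omega_R),R)$, and the Rees lemma applied to the $\fkm$-primary ideal $\tr_R(\omega_R)$ to conclude $\tr_R(\omega_R)=R$. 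None of this chain is present, even in outline, in your proposal, so as written the argument establishes only the weaker bound $\ell_R(R/\tr_R(\omega_R))\le 2bg(R)$.
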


\begin{proof} 
We may assume that $bg(R)$ is finite; hence, there exists a Gorenstein ring $S$ with $bg(R)=\ell_S(R/S)$ such that $S\subseteq R$ is a birational extension. 
Since the ring homomorphism $S/(\fkm\cap S) \to R/\fkm$ is injective, 
\[
\ell_R(R/\tr_R(\omega_R))\le \ell_R(R/\tr_R(\omega_R)){\cdot}\ell_S(R/\fkm)= \ell_S(R/\tr_R(\omega_R)).
\]
It follows that $\ell_R(R/\tr_R(\omega_R)) \le 2bg(R) -\ell_S(\tr_R(\omega_R)/(S:R))$ by Proposition \ref{thm3.2}. 
On the other hand, when $\tr_R(\omega_R)=S:R$, we have $\tr_R(\omega_R)\cong \omega_R$ since $S:R\cong \omega_R$. Hence, it is enough to prove that $\tr_R(\omega_R)\cong \omega_R$ implies that $R$ is Gorenstein. Suppose that $\tr_R(\omega_R)\cong \omega_R$. Then we obtain the isomorphisms 
\[
R\cong \Hom_R(\omega_R, \omega_R)\cong \Hom_R(\tr_R(\omega_R), \tr_R(\omega_R))\cong \tr_R(\omega_R): \tr_R(\omega_R)
\] 
(note that $\tr_R(\omega_R)$ contains a non-zerodivisor of $R$ since $S:R\subseteq \tr_R(\omega_R)$). It follows that $\tr_R(\omega_R): \tr_R(\omega_R) = \alpha R$ for some $\alpha \in \rmQ(R)$. Then, $\alpha  \in \alpha R=\tr_R(\omega_R):~\tr_R(\omega_R)$. On the other hand, since $1\in \tr_R(\omega_R):~\tr_R(\omega_R)= \alpha R$, $\alpha^{-1} \in R \subseteq \tr_R(\omega_R): \tr_R(\omega_R)$. Thus, $\alpha$ is a unit of the endomorphism algebra $\tr_R(\omega_R):~\tr_R(\omega_R)$. Therefore, 
\[
R=\alpha^{-1}(\tr_R(\omega_R): \tr_R(\omega_R)) = \tr_R(\omega_R): \tr_R(\omega_R) = R: \tr_R(\omega_R),
\]
where the last equality follows by \cite[Proposition 2.8(vi)]{Lin}.
By considering the $R$-dual $R:- = \Hom_R(-, R)$ of the exact sequence $0 \to \tr_R(\omega_R) \to R \to R/\tr_R(\omega_R) \to 0$, it follows that $\Ext_R^1(R/\tr_R(\omega_R), R)=0$. Thus, by using the Rees lemma and the fact that $\tr_R(\omega_R)$ is an $\fkm$-primary ideal of $R$, $\tr_R(\omega_R)=R$. Hence, $R$ is Gorenstein by \cite[Lemma 2.1]{HHS}. 

Therefore, by noting that $S:R\cong \omega_R$, we obtain that $\ell_S(\tr_R(\omega_R)/(S:R))>0$ if $R$ is not Gorenstein. Thus, 
\[
\ell_R(R/\tr_R(\omega_R)) \le 2bg(R) -\ell_S(\tr_R(\omega_R)/(S:R)) \le 2bg(R) -1
\] 
as desired.
\end{proof}

The following examples say that the inequality of Corollary \ref{corthm3.2} is sharp. In particular, Question \ref{mainquest}(b) has a negative answer:

\begin{ex}\label{counterex2}
Let $H=\langle 10, 11, 12, 13, 14, 17\rangle$ and $R=K[[H]]$. Then, we can check that $\tr_R(\omega_R)=(t^{10}, t^{11}, t^{13}, t^{14}, t^{29})$. Thus, we obtain that $\ell_R(R/\tr_R(\omega_R))=3$. On the other hand, we can also check that $H'=\langle 10, 11, 12, 13, 14\rangle$ is symmetric and $\ell_R(R/K[[H']])=2$; hence, $bg(R)\le 2$. This follows that 
\[
3=\ell_R(R/\tr_R(\omega_R)) \le 2 bg(R) -1 \le 3.
\] 
Therefore, $bg(R)=2 < 3 = \ell_R(R/\tr_R(\omega_R))$. Thus, Question \ref{mainquest}(b) has a negative answer.
\end{ex}

We can generalize Example \ref{counterex2} as follows:

\begin{ex}\label{counterex3}
Let $\ell\ge 0$, and let 
\[
H=\langle m \ \ : \ \ 6\ell+10 \le m \le 9\ell +14 \text{\ \  or \ \ $m=9\ell+17 +3s$ for $0 \le s \le 3\ell$}\rangle.
\]
Set $R=K[[H]]$. Then, $\ell_R(R/\tr_R(\omega_R))=2\ell + 3$. On the other hand, 
\[
H'=\langle m \ \ : \ \ 6\ell+10 \le m \le 9\ell +14 \rangle
\] 
is symmetric and $\ell_R(R/K[[H']])=\ell+2$; hence, $bg(R)\le \ell + 2$. This follows that 
\[
2\ell + 3=\ell_R(R/\tr_R(\omega_R)) \le 2 bg(R) -1 \le 2\ell + 3.
\] 
Therefore, $bg(R)=\ell + 2 < 2\ell + 3 = \ell_R(R/\tr_R(\omega_R))$ for all $\ell \ge 0$. Thus, the inequality of Corollary \ref{corthm3.2} is sharp.
\end{ex}

 

\section{the invariants in far-flung Gorenstein rings}\label{section4}
In this section we study the invariants $\ell_R(R/\tr_R(\omega_R))$, $bg(R)$, and $g(H)-n(H)$ in a special class of numerical semigroup rings. Since $bg(R)$ is defined only in local rings, we reuse the notation of Section \ref{section2} for the completion $K[[H]]$ of the numerical semigroup ring $K[H]$. Set $R=K[[H]]$. Write
\[
H=\{a_0=0<a_1<a_2<\cdots <a_{n}< \cdots \}.
\]
Note that $a_{n(H)+i}=a_{n(H)} + i$ for all $i\ge 0$. 

\begin{prop} \label{prop4.1}
$bg(R) \le n(H) = \ell_R(R/(R:\ol{R}))$.
\end{prop}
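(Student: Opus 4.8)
The plan is to prove the bound by exhibiting a single Gorenstein birational subring $S \subseteq R$ with $\ell_S(R/S) = n(H)$; since $bg(R)$ is an infimum over all such $S$, this suffices. I will take $S = K[[H']]$ for an explicit symmetric numerical semigroup $H' \subseteq H$, so that $S$ is Gorenstein by Kunz's theorem. The only arithmetic input needed is the identity $\rmF(H)+1 = n(H) + g(H)$, which merely counts the integers in $[0,\rmF(H)]$ according to whether they are non-gaps or gaps of $H$ (note $\rmF(H)$ itself is a gap). Writing $c = \rmF(H)+1$ for the conductor, so that $\ol R = K[[t]]$ and every integer $m \ge c$ lies in $H$, the target becomes: find a symmetric $H' \subseteq H$ with $|H \setminus H'| = n(H)$.

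The candidate is
\[
H' = \{0\} \cup \big(\{m \in \mathbb{N} : m \ge c\} \setminus \{2c-1\}\big).
\]
First I would check that $H'$ is a numerical semigroup contained in $H$: containment is clear since each $m \ge c$ already lies in $H$; the complement $\mathbb{N}\setminus H' = \{1,\dots,c-1\}\cup\{2c-1\}$ is finite; and closure under addition is immediate because the sum of two positive elements of $H'$ is at least $2c$, hence avoids the single deleted value $2c-1$. Next I would verify symmetry: the Frobenius number is $\rmF(H') = 2c-1$ and the gaps are exactly $\{1,\dots,c-1\}\cup\{2c-1\}$, so $g(H') = c = (\rmF(H')+1)/2$, which is Kunz's numerical criterion for $H'$ to be symmetric. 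Therefore $S := K[[H']]$ is a one-dimensional Gorenstein ring.

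Finally I would identify $S \subseteq R$ as a birational extension — the two rings share the total quotient ring $K((t))$, and $R$ is a finite $S$-module because $H$ and $H'$ agree in all sufficiently large degrees — and then compute the colength. Since $R$ and $S$ have the same residue field $K$, one has $\ell_S(R/S) = \dim_K(R/S) = |H \setminus H'|$, and the description above shows that $H \setminus H'$ consists of the $n(H)-1$ nonzero non-gaps of $H$ below $c$ together with the single element $2c-1$, for a total of $n(H)$. Hence $bg(R) \le \ell_S(R/S) = n(H) = \ell_R(R/(R:\ol R))$.

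I expect the only real point requiring care to be the choice of $H'$ itself. The naive half-line $\{0\}\cup\{m \ge c\}$ has $c-1$ gaps and thus fails to be symmetric whenever $c \ge 3$, and the key observation is that deleting the single value $2c-1$ simultaneously raises the Frobenius number to $2c-1$ and rebalances the gap count to $c$, producing a symmetric semigroup at the cost of removing exactly one element above the conductor. I would also remark that the resulting inequality need not be sharp — in concrete cases (for instance, suitable two-generated subsemigroups of $H$) one often finds symmetric $H'$ with a strictly smaller complement — but the construction above yields the clean universal bound $n(H)$.
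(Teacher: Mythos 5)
Your proof is correct, and it follows the same overall strategy as the paper: exhibit one explicit symmetric numerical subsemigroup $H'\subseteq H$ with $|H\setminus H'|=n(H)$, so that $S=K[[H']]$ is a Gorenstein birational subring witnessing $bg(R)\le n(H)$. The difference lies in the witness. The paper takes $H'$ to be the semigroup generated by $c, c+1,\dots,c+n(H)-2$ (where $c=\rmF(H)+1$) and asserts that its complement is $\{1,\dots,c-1\}\cup\{c+n(H)-1\}$; read literally, that assertion fails whenever $g(H)>0$, since the generated semigroup omits the whole interval $[c+n(H),2c-1]$ (try $H=\langle 3,4\rangle$: the paper's $H'=\langle 6,7\rangle$ has Frobenius number $29$, not $8$), and the set $\{0\}\cup[c,\infty)\setminus\{c+n(H)-1\}$ that does have the stated complement is not symmetric because $c-1$ is then also a pseudo-Frobenius number. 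Your choice $H'=\{0\}\cup\bigl([c,\infty)\setminus\{2c-1\}\bigr)$ avoids all of this: closure under addition is immediate since sums of nonzero elements are at least $2c$, symmetry follows from $g(H')=c=(\rmF(H')+1)/2$, and the count $|H\setminus H'|=(n(H)-1)+1=n(H)$ is transparent. So your argument is not merely an alternative route; it is a correct and cleaner instantiation of the idea that the paper's printed construction garbles. The only cosmetic remarks: the identity $\rmF(H)+1=n(H)+g(H)$ you flag as "the only arithmetic input" is never actually used, and you may want to note explicitly that the degenerate case $H=\mathbb{N}$ (where $c\le 0$ and the construction collapses) is trivial since then $R$ is regular and both sides vanish.
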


\begin{proof}
Set 
\begin{align*} 
H'=\langle a_0, a_{n(H)}, a_{n(H)+1}, a_{n(H)+2}, \dots, a_{2n(H)-2}\rangle.
\end{align*}
 Then 
 \begin{align*} 
 \mathbb{N}\setminus H'=&\{i : 1\le i \le a_{n(H)}-1\} \cup  \{a_{2n(H)-1}=a_{n(H)}+n(H)-1\}. 
 \end{align*}
 Hence, $\mathrm{PF}(H)=\{a_{2n(H)-1}\}$. It follows that $H'$ is symmetric, that is, $K[[H']]$ is Gorenstein. By noting that 
 \[
 H\setminus H' = \{a_1, a_2, \dots, a_{n(H)-1}, a_{2n(H)-1}\}, 
 \]
 we obtain that $\ell_{K[[H']]}(R/K[[H']])=n(H)$. Hence, $bg(R) \le n(H)$.
 \end{proof}

In general, it would be difficult to confirm that $bg(R)$ is no longer small. But, if $R$ is a far-flung Gorenstein ring, then we have $bg(R) = n(H)$ (see Proposition \ref{prop4.3}). Let us recall the notion of far-flung Gorenstein rings:

\begin{defn} (\cite[Definition 2.3]{HKS2})
We say that a numerical semigroup ring $R$ is a {\it far-flung Gorenstein ring} if $\tr_R(\omega_R) = R:\ol{R}$. 
\end{defn}

Note that for an arbitrary numerical semigroup ring $R$, we have $\ell_R(R/\tr_R(\omega_R)) \le n(H)$ (e.g. \cite[Proposition A.1]{HHS2}, \cite[Lemma 3.2]{Kuma}). Hence, to consider the upper bound of $\ell_R(R/\tr_R(\omega_R))$, far-flung Gorenstein rings were a good trial run (recall Example \ref{counterex}). On the other hand, Question \ref{mainquest}(b) has a positive answer for far-flung Gorenstein rings -- even though Question \ref{mainquest}(b) has a negative answer in general.

\begin{prop} \label{prop4.3}
If $R$ is a far-flung Gorenstein numerical semigroup ring, then $bg(R)=n(H)=\ell_R(R/\tr_R(\omega_R))$. 
\end{prop}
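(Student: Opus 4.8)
The plan is to establish the two equalities separately. The identity $n(H)=\ell_R(R/\tr_R(\omega_R))$ is immediate: by the definition of a far-flung Gorenstein ring we have $\tr_R(\omega_R)=R:\ol R$, and $n(H)=\ell_R(R/(R:\ol R))$ as recorded in Proposition \ref{prop4.1}. Since Proposition \ref{prop4.1} already gives $bg(R)\le n(H)$, the whole statement reduces to the reverse inequality $bg(R)\ge n(H)$; that is, I must show $\ell_S(R/S)\ge n(H)$ for every Gorenstein ring $S$ with $S\subseteq R$ a birational extension. If $R$ happens to be Gorenstein, then $\tr_R(\omega_R)=R$ forces $R=R:\ol R$, hence $\ol R=R$ and $n(H)=0$, so the assertion is trivial; thus I may assume $R$ is not Gorenstein and apply the results of Section \ref{section3}.

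Fix such an $S$ and write $\fkc=R:\ol R=\tr_R(\omega_R)$ for the conductor. Proposition \ref{thm3.2} reads
\[
\ell_S(R/\fkc)=2\ell_S(R/S)-\ell_S(\fkc/(S:R)),
\]
where $S:R\cong\omega_R$ and $S:R\subseteq\tr_R(\omega_R)=\fkc$, the latter because the ideal $S:R\subseteq R$ is isomorphic to $\omega_R$ and its inclusion into $R$ realizes it inside the trace. Rearranging,
\[
\ell_S(R/S)=\tfrac12\bigl(\ell_S(R/\fkc)+\ell_S(\fkc/(S:R))\bigr),
\]
so it suffices to bound each summand below by $n(H)$. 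The first summand is easy: $R/\fkc$ has finite $R$-length $n(H)$, and for a finite-length $R$-module $M$ one has $\ell_S(M)=\ell_R(M)\,\ell_S(R/\fkm)\ge\ell_R(M)$, so $\ell_S(R/\fkc)\ge\ell_R(R/\fkc)=n(H)$.

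For the second summand I would exploit the duality furnished by the Gorenstein ring $S$. Since $S$ is its own canonical module, the argument of Lemma \ref{Cdual} applies verbatim with $C$ replaced by $S$: for fractional ideals $I\subseteq J$ one has $\ell_S(J/I)=\ell_S((S:I)/(S:J))$, and every fractional ideal is reflexive, $S:(S:I)=I$. Applying this to $S:R\subseteq\fkc$ gives
\[
\ell_S(\fkc/(S:R))=\ell_S\bigl((S:(S:R))/(S:\fkc)\bigr)=\ell_S(R/(S:\fkc)).
\]
The crux is then to prove $S:\fkc\subseteq\fkc$, after which $\ell_S(R/(S:\fkc))\ge\ell_S(R/\fkc)\ge n(H)$ and the proof concludes. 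To see this inclusion, note that the conductor $\fkc$ is stable under multiplication by $\ol R$, hence so is $S:\fkc$; thus $S:\fkc$ is a fractional $\ol R$-ideal. Reflexivity together with $S:R\subseteq\fkc$ gives $S:\fkc\subseteq S:(S:R)=R$, and the largest $\ol R$-submodule of $R$ is exactly $\fkc=R:\ol R$; therefore $S:\fkc\subseteq\fkc$.

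The main obstacle is this last inclusion $S:\fkc\subseteq\fkc$: it is where the far-flung hypothesis is genuinely used, through both $S:R\subseteq\tr_R(\omega_R)=\fkc$ and the $\ol R$-stability of $\fkc$, and it requires keeping careful track of which colon ideals are $\ol R$-modules versus merely $S$- or $R$-modules. A secondary point to check is the length comparison $\ell_S(M)=\ell_R(M)\,\ell_S(R/\fkm)\ge\ell_R(M)$ for finite-length $R$-modules $M$, which guarantees that the $S$-length estimates dominate the target $R$-length $n(H)$. Combining the two bounds yields $\ell_S(R/S)\ge n(H)$ for every admissible $S$, whence $bg(R)\ge n(H)$ and, with Proposition \ref{prop4.1}, the desired chain of equalities.
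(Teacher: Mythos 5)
Your proof is correct. The skeleton coincides with the paper's: both arguments rest on Proposition \ref{thm3.2} and both reduce the inequality $bg(R)\ge n(H)$ to the estimate $\ell_S(\tr_R(\omega_R)/(S:R))\ge n(H)$, using that the far-flung hypothesis makes $\tr_R(\omega_R)=R:\ol R=t^c\ol R$ an $\ol R$-module. Where you genuinely differ is in how that estimate is obtained. The paper shifts by $t^{-c}$, writes $S:R=t^cX$ with $X=t^{-c}(S:R)\subseteq\ol R$, notes $X\cong\omega_R$, and dualizes with respect to $X$ to get $\ell_R(t^c\ol R/t^cX)=\ell_R(R/(X:\ol R))$ with $X:\ol R\subseteq R:\ol R$. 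You instead dualize with respect to the Gorenstein subring $S$ itself, using reflexivity $S:(S:R)=R$ to obtain $\ell_S(\fkc/(S:R))=\ell_S(R/(S:\fkc))$, and then observe that $S:\fkc$ is an $\ol R$-submodule of $R$ and hence lies inside the conductor $R:\ol R=\fkc$. The two computations are in fact parallel — one checks that $X:\ol R=t^{-c}(S:\ol R)=S:\fkc$ — but your route avoids re-justifying canonical-module duality for the shifted ideal $X$, at the cost of invoking reflexivity of fractional ideals over the one-dimensional Gorenstein ring $S$ (standard, e.g.\ \cite{HK}). Two further points in your write-up are slightly more careful than the paper's: you dispose of the Gorenstein case separately (needed since Proposition \ref{thm3.2} is stated for non-Gorenstein $R$), and you use only the inequality $\ell_S(M)\ge\ell_R(M)$ for finite-length $R$-modules rather than asserting equality of $S$- and $R$-lengths.
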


\begin{proof}
Let $S$ be a Gorenstein ring such that $S\subseteq R$ is a birational extension. 
 By Proposition \ref{thm3.2}, we have 
\begin{align} \label{aaaa}
\begin{split} 
\ell_R(R/\tr_R(\omega_R)) =& \ell_S(R/\tr_R(\omega_R)) = 2 \ell_S(R/S) - \ell_S(\tr_R(\omega_R)/(S:R))\\
=&2 \ell_R(R/S) - \ell_R(\tr_R(\omega_R)/(S:R)).
\end{split}
\end{align}
Since $R$ is far-flung Gorenstein, $\tr_R(\omega_R) = R:\ol{R} = t^{c}\ol{R}$, where $c=F(H) +1$ is the conductor of $H$. Since $S:R\subseteq \tr_R(\omega_R) = t^{c}\ol{R}$, we can write $S:R=t^c X$, where $X=t^{-c} (S:R)\subseteq \ol{R}$. Then, we obtain that 
\[
\ell_R(\tr_R(\omega_R)/(S:R)) = \ell_R(t^c\ol{R}/t^cX)= \ell_R(\ol{R}/X) = \ell_R((X:X)/(X:\ol{R})),
\]
where the third equality follows by the same argument which are used for the proof of Lemma \ref{Cdual}, because $X\cong C$. Furthermore, we have 
\[
X:X=R \quad \text{and} \quad R:\ol{R} = (X:X):\ol{R}= X:X\ol{R} \supseteq X:\ol{R},
\]
where the last inclusion follows from $X\ol{R}\subseteq \ol{R}{\cdot}\ol{R}=\ol{R}$ since $X\subseteq \ol{R}$.
Therefore, $\ell_R(\tr_R(\omega_R)/(S:R)) =\ell_R(R/(X:\ol{R})) \ge \ell_R(R/(R:\ol{R})) = n(H)$. By noting that $\ell_R(R/\tr_R(\omega_R))=\ell_R(R/(R:\ol{R})) =n(H)$, \eqref{aaaa} provides that 
\[
n(H) =\ell_R(R/\tr_R(\omega_R)) \le 2 \ell_R(R/S) - n(H).
\]
Thus, $n(H) \le \ell_R(R/S)$ for each Gorenstein ring $S$ such that $S\subseteq R$ is a birational extension. This follows that $n(H) \le bg(R)$. By combining with Proposition \ref{prop4.1}, we have the conclusion.
 \end{proof}

Due to Proposition \ref{prop4.3}, we obtain examples of rings with $bg(R) > g(H) - n(H)$ and rings with $bg(R) < g(H) - n(H)$ as follows.

\begin{ex} \label{exfinal}
\begin{enumerate}[{\rm (a)}] 
\item Let $H_1=\langle 13, 14, 15, 16, 17, 18, 21, 23\rangle$ and $R_1=K[[H_1]]$. Then, as we saw in Example \ref{counterex}, $R_1$ is a far-flung Gorenstein ring (\cite[Example 5.4 (iii)]{HKS2}) and $\ell_{R_1}(R_1/\tr_{R_1}(\omega_{R_1}))=9 > 8=17-9=g(H_1)-n(H_1)$. Thus, we obtain that $bg(R_1)>g(H_1)-n(H_1)$ by Proposition \ref{prop4.3}.
\item Let $H_2=\langle 5, 6, 13, 14\rangle$ and $R_2=K[[H_2]]$. Then, $R_2$ is a far-flung Gorenstein ring (\cite[Theorem 6.4(ii)(1-2)]{HKS2}), and $\ell_{R_2}(R_2/\tr_{R_2}(\omega_{R_2}))=3 <4=7-3=g(H_2)-n(H_2)$. Thus, we obtain that $bg(R_2) < g(H_2)-n(H_2)$ by Proposition \ref{prop4.3}.
\end{enumerate} 
\end{ex}

\begin{rem} 
For a given ring $R$, the Gorenstein subring $S$ of $R$ for which $bg(R)=\ell_R(R/S)$ are not uniquely determind. Indeed, in Example \ref{exfinal}(b), at least $3$ Gorenstein subrings
\begin{align*} 
S_1=&K[[t^5, t^6]], \quad S_2=K[[t^6, t^{10}, t^{11}, t^{14}, t^{15}]], \quad \text{and}  \quad \\
S_3=&K[[t^{10}, t^{11}, t^{12}, t^{13}, t^{14}, t^{15}, t^{16}, t^{17}, t^{18}]]
\end{align*}
provide $bg(R_2)=3$. 
\end{rem}


\addcontentsline{toc}{section}{references}

\end{document}